\documentclass[11pt,oneside,reqno]{article}

\usepackage{amsmath,amsthm,amssymb,color,bm}

\usepackage{array}
\setlength\extrarowheight{3pt}

\usepackage[margin=2.5cm,a4paper]{geometry}
\usepackage{bbm}
\usepackage{paralist}
\usepackage{mathrsfs}
\usepackage[breaklinks=true]{hyperref}

\usepackage[square]{natbib} %[authoryear,longnamesfirst]{natbib}

\renewcommand{\cite}{\citep*}

\numberwithin{equation}{section}
\allowdisplaybreaks[4]

\theoremstyle{plain}
\newtheorem{theorem}{Theorem}[section]

\newtheorem{lemma}[theorem]{Lemma}

\theoremstyle{definition}
\newtheorem{definition}[theorem]{Definition}
\newtheorem{example}[theorem]{Example}
\newtheorem{remark}[theorem]{Remark}

\makeatletter

\renewcommand{\phi}{\varphi}
\newcommand{\eps}{\varepsilon}
\newcommand{\eq}{\eqref}

\newcommand{\lito}{\mathrm{o}}

\newcommand{\ind}{\mathbf{1}}

\newcommand{\Exp}{\mathop{\mathrm{Exp}}}

\newcommand{\N}{\mathrm{N}}

\def\E{\mathbbm{E}}
%\newcommand{\IP}{\mathbbm{P}}

%\newcommand{\IN}{\mathbbm{N}}
%\newcommand{\IZ}{\mathbbm{Z}}
%\newcommand{\IR}{\mathbbm{R}}
%\newcommand{\IS}{\mathbbm{S}}
%\newcommand{\IC}{\mathbbm{C}}
%\newcommand{\II}{\mathbbm{I}}
%\newcommand{\Id}{\mathrm{Id}}

%% define \sA \cA \bA \IA for all capital letters A ... Z
\newcounter{ctr}\loop\stepcounter{ctr}\edef\X{\@Alph\c@ctr}%
	\expandafter\edef\csname s\X\endcsname{\noexpand\mathscr{\X}}
	\expandafter\edef\csname c\X\endcsname{\noexpand\mathcal{\X}}
	\expandafter\edef\csname b\X\endcsname{\noexpand\boldsymbol{\X}}
	\expandafter\edef\csname I\X\endcsname{\noexpand\mathbbm{\X}}
	\expandafter\edef\csname r\X\endcsname{\noexpand\mathrm{\X}}
\ifnum\thectr<26\repeat

\def\be#1{\begin{equation*}#1\end{equation*}}
\def\ben#1{\begin{equation}#1\end{equation}}

\def\ba#1{\begin{align*}#1\end{align*}}
\def\ban#1{\begin{align}#1\end{align}}

%%% bracket commands
\def\given{\typeout{Command 'given' should only be used within bracket command}}
\newcounter{@bracketlevel}
\def\@bracketfactory#1#2#3#4#5#6{
\expandafter\def\csname#1\endcsname##1{%
\addtocounter{@bracketlevel}{1}%
\global\expandafter\let\csname @middummy\alph{@bracketlevel}\endcsname\given%
\global\def\given{\mskip#5\csname#4\endcsname\vert\mskip#6}\csname#4l\endcsname#2##1\csname#4r\endcsname#3%
\global\expandafter\let\expandafter\given\csname @middummy\alph{@bracketlevel}\endcsname
\addtocounter{@bracketlevel}{-1}}%
}
\def\bracketfactory#1#2#3{%
\@bracketfactory{#1}{#2}{#3}{relax}{1mu plus 0.25mu minus 0.25mu}{0.6mu plus 0.15mu minus 0.15mu}
\@bracketfactory{b#1}{#2}{#3}{big}{1mu plus 0.25mu minus 0.25mu}{0.6mu plus 0.15mu minus 0.15mu}
\@bracketfactory{bb#1}{#2}{#3}{Big}{2.4mu plus 0.8mu minus 0.8mu}{1.8mu plus 0.6mu minus 0.6mu}
\@bracketfactory{bbb#1}{#2}{#3}{bigg}{3.2mu plus 1mu minus 1mu}{2.4mu plus 0.75mu minus 0.75mu}
\@bracketfactory{bbbb#1}{#2}{#3}{Bigg}{4mu plus 1mu minus 1mu}{3mu plus 0.75mu minus 0.75mu}
}
\bracketfactory{clc}{\lbrace}{\rbrace}
\bracketfactory{clr}{(}{)}
\bracketfactory{cls}{[}{]}

\def\abs#1{\vert#1\vert}

\def\bbabs#1{\Bigl\vert#1\Bigr\vert}

\def\angle#1{{\langle#1\rangle}}

\renewcommand\section{\@startsection {section}{1}{\z@}%
{-3.5ex \@plus -1ex \@minus -.2ex}%
{1.3ex \@plus.2ex}%
{\center\small\sc\mathversion{bold}\MakeUppercase}}

\def\subsection#1{\@startsection {subsection}{2}{0pt}%
{-3.5ex \@plus -1ex \@minus -.2ex}%
{1ex \@plus.2ex}%
{\bf\mathversion{bold}}{#1}}

\def\subsubsection#1{\@startsection{subsubsection}{3}{0pt}%
{\medskipamount}%
{-10pt}%
{\normalsize\itshape}{\kern-2.2ex. #1.}}

\def\blfootnote{\xdef\@thefnmark{}\@footnotetext}

\makeatother

\newcommand\G{\Gamma}

\def\l{\lambda}
\newcommand\ta{\textnormal{\textbf{a}}}

\newcommand\tZ{\textnormal{\textbf{Z}}}

\newcommand\tx{\textnormal{\textbf{x}}}

\newcommand{\simp}{\Delta_{K}}

\newcommand{\PD}{\textrm{PD}}
\newcommand{\DP}{\textrm{DP}}

% commands by Han

\def\d{\delta}

\def\a{\alpha}
\def\b{\beta}

%\parindent=0cm
%\parskip = \baselineskip

\begin{document}

\title{\sc\bf\large\MakeUppercase{Stein's method and duality of Markov processes}}
\author{\sc Han~L.~Gan}
\date{\it Northwestern University}
\maketitle

\begin{abstract} 
One of the key ingredients to successfully apply Stein's method for distributional approximation are solutions to the Stein equations and their derivatives. Using Barbour's generator approach, one can solve for the solutions to the Stein equation in terms of the semi-group of a Markov process, which is typically a diffusion process if it is a continuous distribution. For an arbitrary diffusion it can a difficult task to evaluate the semi-group and its derivatives. In this paper, for polynomial test functions, instead of calculating the semi-group of a diffusion, via a duality argument, we instead utilise the semi-group of a much simpler Markov jump process. This approach yields a new method for explicitly solving for the solutions of Stein equations for diffusion processes. We present both the general idea of the approach and examples for both univariate and multivariate distributions.
\end{abstract}

% \noindent\textbf{Keywords: } 

\section{Introduction}
Stein's method using generator approach,  pioneered in~\cite{B88,B90}, involves characterising a target distribution as the stationary distribution of an (infinitesimal) generator of a stochastic process. Such a generator is then used as the Stein operator or identity. As an example, consider the following generator,
\ba{
\cA f(x) = f''(x) - xf'(x).
}
This is the generator of an Ornstein-Uhlenbeck process, and notably the stationary distribution of such a process is the standard normal distribution. The general idea of the generator method can be summarised as follows.

Let $\cA f$ be a generator, and assume that its associated stationary distribution $\pi$ exists and is unique. Then for any function $h$ from a suitably rich family of test functions $\cF$, let $f_h$ be the solution to 
\ban{
\cA f_h(x) = h(x) - \E h(Z),\label{eq:steineq}
}
where $Z \sim \pi$. For any random variable $W$, our aim is to bound $\abs{\E h(W) - \E h(Z)}$ for all $h \in \cF$. To achieve this, we substitute $x = W$ into Equation~\eq{eq:steineq}, and take expectations of both sides. The problem is now transferred to taking expectations of the generator $\cA f_h(W)$. It turns out that properties of the function $f_h$ are essential, in particular supremum bounds on the function and its derivatives. Under the right conditions, (for example if $\cA$ generates a Feller semi-group), it can be shown that for any $h \in \cF$,
\ban{
f_h(x) = -\int_0^\infty [\E h(Z_x(t)) - \E h(Z) ]dt, \label{eq:steinsol}
}
where $Z_x(t)$ is a stochastic process following generator $\cA$ such that $Z_x(0) = x$. The main difficulty with using such an approach is that to evaluate~\eq{eq:steinsol}, one typically needs to have a very strong understanding of the process $Z_x(t)$. Our approach in this paper is to bypass this problem by evaluating $\E h(Z_x(t))$ for a diffusion by instead calculating the semi-group for a much simpler Markov jump process via a duality argument. 

We begin by defining the canonical definition of duality of two Markov processes with respect to a duality function. 

\begin{definition}
Let $X(t)$ and $Y(t)$ be two Markov processes with state spaces $E_1$ and $E_2$, and $H$ be a (bounded measurable) function on $(E_1, E_2)$. Then $X(t)$ is \emph{dual to $Y(t)$ with respect to a function $H$} if for all $x \in E_1, y \in E_2$ and $t \geq 0$,
\ban{
\E_xH(X(t),y) = \E^yH(x,Y(t)),\label{eq:dualid}
}
where $\E_x$ indicates $X(0) = x$ and $\E^y$ indicates $Y(0) = y$.
\end{definition}
For a summary of Markov dual processes, see the survey article~\cite{JK14}. Note that the restriction that $H$ be bounded and measurable can be weakened, and the equality above takes a slightly different form, see~\cite[Section 4.4]{EK86} for full details. 

In this paper we will be focusing on using a form of moment duality for the convergence of moments of distributions. We note that it can be trivially extended to polynomial test functions. Convergence of $k$-th moments, is generally not a sufficient condition to ensure convergence in distribution. If in addition to convergence of all moments, the limiting distribution has finite support, or finite moment generating function in a neighborhood of 0, then this yields convergence in distribution~\cite{Billingsley13}. The examples we present will usually satisfy at least one of these two conditions. 

Our goal is to use a dual process to calculate the semi-group for test functions that correspond to moments of random variables. The corresponding duality function for moments is therefore functions $H$ of the form $H(x,y) = x^y$. It remains an interesting open problem whether other forms of duality can also yield fruitful results. For example a common form of duality known as Siegmund duality, takes the form
\ba{
\IP_x( X_0 \leq Y_t) = \IP^y(X_t \leq Y_0).
}
Such a duality would give an alternative approach to calculating bounds in Kolmogorov distance. 

We begin with a simple motivating example for the Ornstein-Uhlenbeck process and Normal approximation, to illustrate how we can utilise duality to explicitly calculate solutions to the Stein equation for moment test functions. Recall
\ba{
\cA f(x) = f''(x) - xf'(x),
}
as the generator of the Ornstein-Uhlenbeck process. Furthermore define the following
\ba{
\cB f(y) = y(y-1)(f(y-2) - f(y)),
}
as the generator of a Markov jump process on non-negative integers such that when it is in state $y$, it waits an exponential $y(y-1)$ amount of time before it jumps down in step sizes of 2, and this process continues until it is absorbed at $1$ or $0$. It can be shown that these two processes are dual to each other with respect to the moment duality function $H(x,y) = x^y$, see~\cite[Example~4.4.9]{EK86} for full details. Set $X(t)$ and $Y(t)$ to be processes with generators $\cA$ and $\cB$ respectively. Then it can be shown that
\ban{
\E[ X(t)^{Y(0)}] = \E \left[ X(0)^{Y(t)} \exp \left( \int_0^t (Y^2(u) - 2Y(u)) du \right) \right].\label{eq:normaldual}
}
This slightly more complicated form of duality compared to~\eq{eq:dualid} due to the fact that our dual function is unbounded. Recall the solution to the Stein equation~\eq{eq:steinsol} takes the form $f_h(x) = -\int_0^\infty [\E h(Z_x(t)) - \E h(Z) ]dt$, where $Z \sim \N(0,1)$. Set $h(x) = x^k$, $X(0) = x$, and $Y(0) = k$, then~\eq{eq:normaldual} gives us an alternative approach to evaluating $\E h(Z_x(t))$, 
\ban{
f_h(x) = -\int_0^\infty \left\{\E \left[ X(0)^{Y(t)} \exp \left( \int_0^t (Y^2(u) - 2Y(u)) du \right) \right] - \E h(Z) \right\}dt.
}
The primary advantage of such an approach is that our solution rather than being in terms of the Ornstein-Uhlenbeck diffusion, is now written purely in terms of a relatively simple Markov jump process.

We compute a simple example case where $k=3$ to illustrate our general approach and also to confirm that this yields the correct solution. Let $\tau_1$ be an exponential random variable with parameter 6.  Then,
\ba{
f_h(x) &=  -\int_0^\infty \left\{ \E \left[ X(0)^{Y(t)} \exp \left( \int_0^t (Y^2(u) - 2Y(u)) du \right) \right] - \E h(Z) \right\}dt\\
	&= - \E \int_0^{\tau_1} x^3 \exp \left( \int_0^t Y^2(u) - 2Y(u) du \right)dt- \E \int_{\tau_1}^\infty x \exp \left( \int_0^t Y^2(u) - 2Y(u) du \right)dt\\
	&= -\E \int_0^{\tau_1} x^3 \exp \left\{3t \right\} dt- \E \int_{\tau_1}^\infty x \exp \left( 3\tau_1 - (t-\tau_1) \right) dt\\
%	&= - \frac{x^3}{3} \E (e^{3\tau_1} -1) - x \E \int_0^\infty \exp(3\tau_1 - s) ds\\
	&= - \frac{x^3}{3}\E (e^{3\tau_1} -1) - x \E e^{3\tau_1}\\
	&= - \frac{x^3}{3} - 2x.
}
For the Ornstein-Uhlenbeck process and Normal approximation, a closed form for the solution to the Stein equation is well known~\cite{B90}. We can thus use this to confirm that our approach above is successful. For a general function $h$, the solution to the Stein equation is
\ban{
f_h(x) = -\int_0^\infty \left\{\E h(e^{-s} x + \sqrt{1-e^{-2s}} Z) - \E h(Z)\right\} ds.\label{eq:normalsol}
}
Hence for $h(x) = x^3$,
\ba{
f_h(x) &= -\int_0^\infty \E(e^{-s} x + \sqrt{1-e^{-2s}}Z)^3 ds\\
	&= - \int_0^\infty x^3 e^{-3s} + 3 e^{-s}x (1-e^{-2s}) ds\\
%	&= - \int_0^\infty x^3 e^{-3s} + 3x (e^{-s} -  e^{-3s}) ds\\
%	&= -\frac{x^3}{3} -3x[ 1-1/3]\\
	&= -\frac{x^3}{3} - 2x,
}
which is the desired answer. As the example indicates, by finding an appropriate dual process we can certainly explicitly solve for solutions to the Stein equation for certain test functions. The primary question then becomes, how does one find an appropriate dual process? The existence of dual processes is often unclear, let alone a systematic approach for finding such a process, as discussed in the introduction of~\cite{JK14}. To address this issue, in this paper we use a modified version of duality, that will be relatively easy to find. 

If $f(x) = x^k$, then the generator of the Ornstein-Uhlenbeck process can be rewritten in the following form,
\ba{
\cA f(x) &= f''(x) - xf'(x)\\
	& =k(k-1) x^{k-2}- kx^k\\
	& =k ( (k-1) x^{k-2} - x^k). 
}
This suggests an alternative duality with a Markov jump process on the space of \emph{functions}. The process begins with the function $x^k$, and at a rate of $k$ it transitions to the function $(k-1)x^{k-2}$. Then at a rate of $k-2$ it transitions to the function $ (k-1)(k-3)x^{k-4}$ and so forth until it reaches its absorbing state. Depending on whether $k$ is even or odd, this process will either be absorbed at $(k-1)!!$ or 0 respectively, which importantly are the moments of the standard Normal distribution. Note if $f(x) = \a x$, $\cA f(x) = -\a x = 1(0-\a x)$, so can be interpreted as transitioning from $\a x$ to 0 at a rate of 1. Denote by $Y_x(1,t)$ to be the process on functions as described, such that $Y_x(1,0) = x^k$. (The argument 1 corresponds to the coefficient of $x^k$ in our test function.) We now continue our simple example where $k=3$. 

Suppose the following duality equation were true: Let $h(x) = x^3$, and $Z_x(t)$ be the Ornstein-Uhlenbeck process started at $x$, and
\ban{
\E h(Z_x(t)) = \E (Y_x(1,t)).\label{eq:dualex}
}
Then,
\ba{
f_h(x) = -\int_0^\infty \E h(Z_x(t)) &= -\int_0^\infty \E (Y_x(1,t)) dt\\
	&= -\E \int_0^{\tau_1} x^3 dt - \E \int_{\tau_1}^{\tau_2} 2x dt\\
	&= -x^3/3 - 2x,
}
where we used the fact that $\tau_1 \sim \Exp(3)$ and $\tau_2 - \tau_1 \sim \Exp(1)$. 

Our approach in this paper will be to use dual processes of the above form. To successfully apply Stein's method, one typically requires uniform bounds on the supremum of not only the solutions to the Stein equation, but also their derivatives. If the support of our target distribution is unbounded, our approach will explicitly solve for the Stein solution, but will usually not yield uniform supremum bounds. However, these bounds can still be used if more care is taken, see for example~\cite{Gaunt15}. In the case where the support of the distribution is finite, then one can use our approach to find uniform bounds on derivatives of the Stein equation. We shall see this in a few examples.

The remainder of the paper will be structured as follows. In Section 2, we will focus on univariate distributions, formally define our notion of duality, justify its construction and give the general approach for finding a suitable dual process. It will include various examples, and their results will mostly coincide exactly with known results derived from alternative methods. Section 3 will be analogous to Section 2, but for multivariate distributions. The final section will conclude with a discussion of the results and further open questions.

\section{Univariate solutions to Stein equations from duality}
Stein's method for univariate distributions has a rich and well developed theory, see for example the survey articles~\cite{LRS17,Ross11}. In this section we will focus on solutions to Stein equations for univariate distributions using a dual process. We begin by briefly explaining intuition behind the main idea of the duality. The generator of a Markov process has two arguments, a test function $f$ and a point/position in the sample space $x$. Loosely speaking, typically one interprets a generator by first fixing the test function $f$, and the generator in a sense describes how $x$ evolves in time. For our dual process, we reverse this interpretation, we instead fix $x$ and study how the test function $f$ evolves in time. Hence if the generator can be interpreted as or written in the form of a Markov jump process on functions, we can consider the dual process to be this jump process on functions. Now due to the fact that the semi-group of a Markov process can be characterised by its generator, given both the original and dual processes have the same generator, they must also have the same semi-group. We will confirm this fact by comparing our dual process solutions to known solutions using the original process when possible. The following lemma formalises the above argument.
\begin{lemma}\label{mainlemma}
Let $\cA$ be a generator of a one dimensional diffusion process and let $\cD(\cA)$ be its domain. Suppose $\cA$ generates a Feller semi-group on $\cD(\cA)$ and has unique stationary distribution $\mu$. If for $h(x) = x^k \in \cD(\cA)$ where $n \in \{ 1,2,\ldots\}$, and there exist constants $\{a_i\}, \{b_i\}$, such that $\cA f(x)$ can be written as
\ban{
\cA f(x) = \sum_{i=1}^k a_i [ b_i x^{k-i} - x^k],
}
then there exists a Markov jump process $Y(x,t)$ that is dual to $X(x,t)$ where $X(x,t)$ if a diffusion process with generator $\cA$ and $X(x,0) = x$, such that the solution $f_h(x)$ to the Stein equation~\eq{eq:steineq} satisfies
\ba{
f_h(x) = - \int_0^\infty \E Y_x(1,t) - E h(Z) dt,
}
where $Z \sim \mu$, and $Y_x(1,t)$ is a Markov jump process on functions of the form $g_x(a,k) = a x^k$ where $Y_x(1,0) = x^k$ and generator
\ba{
\cB x(g(a,k)) = \sum_{i=1}^k a_i [ g_x(a b_i,k-i) - g_x(a,k)].
}
\end{lemma}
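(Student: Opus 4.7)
My plan is to construct the jump process $Y_x(1,t)$ explicitly and then to establish the moment-matching identity $\E_x[X(t)^k] = \E[Y_x(1,t)]$ by a generator-comparison argument; substituting this identity into~\eq{eq:steinsol} then produces the stated formula for $f_h$.

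First I would make the jump process precise. Define its state space to consist of all pairs $(a,j)$ with $j \in \{0,1,\ldots,k\}$ reachable from $(1,k)$ via the transitions $(a,j) \mapsto (ab_i, j-i)$ at rate $a_i$ (with the analogous decomposition of $\cA$ at each intermediate degree $j$, which is implicit in the hypothesis). Identify the state $(a,j)$ with the function $g_x(a,j) := ax^j$. Because every jump strictly reduces the exponent and $j \geq 0$, at most $k$ jumps occur before the process reaches an absorbing state, so the set of reachable pairs is finite, $Y_x(1,t)$ is non-explosive, and $t \mapsto \E[Y_x(1,t)]$ is a polynomial in $x$ of degree at most $k$ for every $t \geq 0$.

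Next, the heart of the proof is the identity
\ban{
\E_x\bkle{X(t)^k} \;=\; \E\bkle{Y_x(1,t)}, \qquad t \geq 0, \label{eq:moment-match}
}
which I would establish by matching Kolmogorov backward equations. Let $u(x,t) := \E_x[X(t)^k]$. By the Feller hypothesis on $\cA$, $u$ lies in $\cD(\cA)$ for each $t$ and satisfies $\partial_t u = \cA u$ with $u(x,0) = x^k$. Let $v(x,t) := \E[Y_x(1,t)]$. Conditioning on the first jump of $Y_x(1,\cdot)$,
\be{
\frac{\partial v}{\partial t}(x,t)\bigg|_{t=0} = \cB_x\bklr{g_x(1,k)} = \sum_{i=1}^k a_i \bkle{b_i x^{k-i} - x^k},
}
which by hypothesis equals $\cA(x^k)$. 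The same matching holds at every reachable state $(a,j)$ by linearity of $\cA$ and the formal linearity of $\cB_x$ with respect to the coefficient $a$ in $g_x(a,j) = ax^j$; hence $v$ also solves $\partial_t v = \cA v$ with $v(x,0) = x^k$. Uniqueness of solutions to the Cauchy problem on the finite-dimensional, $\cA$-invariant space of polynomials of degree at most $k$ then yields~\eq{eq:moment-match}.

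Finally, substituting~\eq{eq:moment-match} into~\eq{eq:steinsol} with $h(x) = x^k$ gives
\be{
f_h(x) = -\int_0^\infty \bklr{\E Y_x(1,t) - \E h(Z)}\, dt,
}
as claimed. The main obstacle I anticipate is the rigorous generator-matching step: one must check that $x^k$ and all reachable monomials $x^j$ with $0 \leq j \leq k$ lie in $\cD(\cA)$ so that the backward equation genuinely applies, and that the decomposition $\cA(x^j) = \sum_i a_i [b_i x^{j-i} - x^j]$ is compatible at each intermediate degree so that the jump process is well defined for all time. Once these bookkeeping details are in place, uniqueness is immediate because $\cA$ restricts to a finite-dimensional linear ODE on the span of $\{1,x,\ldots,x^k\}$, and the remainder of the argument is a direct substitution.
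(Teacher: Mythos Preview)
Your proposal is correct and follows essentially the same route as the paper's proof: both arguments reduce to showing $\E_x[X(t)^k]=\E[Y_x(1,t)]$ by checking that each quantity satisfies the same differential equation $\partial_t = \cA$ with the same initial datum $x^k$, then invoking uniqueness and substituting into~\eq{eq:steinsol}. The paper's version is terser---it cites \cite[Proposition~1.5(c)]{EK86} for the backward equation for $\E h(X_x(t))$ and then simply asserts that $Y_x(1,t)$ satisfies the same differential form by construction---whereas you spell out the jump-process construction, the non-explosion via degree reduction, and the finite-dimensional ODE uniqueness; these are exactly the ``bookkeeping details'' the paper leaves implicit.
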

\begin{proof}
The condition that we require a Feller semi-group ensures the existence of the solution to the Stein equation~\cite[Proposition~1.5(a)]{EK86}. Essentially it remains to show that the two semi-groups of $\cA$ and $\cB$ are the same, that is for all $t > 0$, $\E h(X_x(t)) = \E Y_x(1,t)$. \cite[Proposition~1.5~(c)]{EK86} yields
\ba{
\E h(X_x(t)) - h(x) &= \int_0^t \E [ \cA h(X_x(s)] ds\\
	&= \int_0^t \sum_{i=1}^k\E  a_i[ b_i X_x(s)^{k-i} - X_x(s)^k] ds,
}
hence
\ba{
\frac{d}{dt} \E h(X_x(t)) = \E \sum_{i=1}^k a_i[b_i X_x(t)^{k-i} - X_x(t)^k].
}
However it follows from the definition of $\cB$ and the equality of the two generators, that the process $Y_x(1,t)$ will satisfy exactly the same differential form. Given they both have the same initial value, the two semi-groups must be identical.
%Similarly
%\ba{
%\E Y_x(1,t) - \E Y_x(1,0) &= \int_0^t \E [ \cB x(Y_x(1,s))] ds\\
%	&= \int_0^t \E \sum_{i=1}^n  a_i[ b_i Y_x(1,s)x^{-i} - Y_x(1,s) ] ds
%}
\end{proof}
%
%\newpage
%Now for the more general case, suppose $k$ is odd, so $\E h(Z) = 0$. Define $\tau_1, \tau_2, \ldots, \tau_{\frac{k-1}{2}}$ (and $\tau_0 = 0$) as the jump times of the process $Y(t)$. Assuming the solution is well defined, and we can switch the order of the expection and integral (find a reference for this?),
%\ba{
%f_h(x) &= - \E \int_0^\infty x^{Y(t)} \exp \left\{ \int_0^t Y^2(u) - 2Y(u) \right\} du dt \\
%	&= - \E \sum_{i=0}^{\frac{k-3}{2}} \int_{\tau_i}^{\tau_{i+1}} x^{k-2i} \exp\left\{ \int_0^t  Y^2(u) - 2Y(u) du \right\} dt \\
%	&\ \ \ \ \ - \E \int_{\tau_{\frac{k-1}{2}}}^\infty x \exp\left\{\int_0^t Y^2(u) - 2Y(u) du\right\} dt.
%}
%For $t \in [\tau_i, \tau_{i+1}]$,
%\ba{
%\int_0^t Y^2(u) - 2Y(u) du &= \sum_{j=1}^i (\tau_j - \tau_{j-1}) (k-2j+2)(k-2j) + \int_{\tau_i}^t (k-2i)(k-2i-2) dt\\
%	&=\sum_{j=1}^i (\tau_j - \tau_{j-1}) (k-2j+2)(k-2j) + (t-\tau_i)(k-2i)(k-2i-2) 
%}

\subsection{Univariate examples}
We first define some common notation which we will be using for all the examples. $\cA$ will denote the characterising diffusion operator, $Y_x(1,t)$ the corresponding dual process and $Z$ a random variable following the target distribution. Our test functions will always be of the form $h(x) = x^k$ unless otherwise specified. Note that this can easily be extended to polynomial test functions.
\begin{example}[Normal distribution]\label{ex:normal}
Recall from the introduction
\ba{
\cA f(x) &= f''(x) - xf'(x)\\
	& =k(k-1) x^{k-2}- kx^k\\
	& =k ( (k-1) x^{k-2} - x^k), 
}
and the description of the dual process therein. Note that $\E(Z^{2k}) =(2k-1)!!$ and $\E(Z^{2k+1}) = 0$. Then for $h(x) = x^{2k+1}$,
\ba{
f_h(x) &= -\int_0^\infty \E Y_x(1,t) dt\\
	&= - \E \sum_{i=0}^{k} \int_{\tau_i}^{\tau_{i+1}} \frac{(2k)!!}{(2k-2i)!!}x^{2k+1-2i} dt\\
	&= - \sum_{i=0}^k \frac{(2k)!!}{(2k-2i)!!(2k-2i+1)}x^{2k+1-2i}, 
}
where $\tau_0 = 0$ and $\tau_{i+1} - \tau_i \sim \Exp(2k-2i + 1)$. Similarly, for $h(x) = x^{2k}$,
\ba{
f_h(x) &= -\int_0^\infty \E [Y_x(1,t) - (2k-1)!!] dt\\
	&= - \E \sum_{i=0}^{k-1} \int_{\tau_i}^{\tau_{i+1}} \left[\frac{(2k-1)!!}{(2k-2i-1)!!}x^{2k-2i} - (2k-1)!!\right]dt\\
	&= - \sum_{i=0}^{k-1}\left[ \frac{(2k-1)!!}{(2k-2i-1)!!(2k-2i)}x^{2k-2i} - \frac{(2k-1)!!}{2k-2i}\right], 
}
where $\tau_0=0$ and $\tau_{i+1} - \tau_i \sim \Exp(2k-2i)$ and noting $Y_x(1,t) = (2k-1)!!$ for $t \geq \tau_k$. Routine calculations can confirm that these solutions coincide with~\eq{eq:normalsol}.
\end{example}

\begin{example}[Gamma distribution]
\cite{Luk94} developed Stein's method for the Gamma distribution using the generator method and we follow the same parameterisation. The Gamma distribution with shape parameter $r>0$ and scale parameter $\l>0$ has density function,
\ba{
f(x) = \frac{\lambda^r}{\G(r)} e^{-\l x}x^{r-1}, \ \ \ x > 0.
}
Furthermore if $X \sim \mathrm{Gamma}(r,\l)$, then $\E X^k = \frac{ \G(r+k)}{\lambda^k \G(r)}$. The generator with the Gamma distribution as its stationary distribution is
\ba{
\cA f(x) = x f''(x) + (r-\l x) f'(x).
}
Hence for $f(x) = x^k$,
\ba{
\cA f(x) &= k(k-1)x^{k-1} + rk x^{k-1} - \l k x^k\\
	&= \l k \left( \frac{ (k-1+r)}{\l}x^{k-1} - x^k \right).
}
\end{example}
The dual process can be therefore described as follows. Given the process is in state $a x^{k}$, it waits an $\Exp(\l n)$ distributed amount of time, before transitioning to $\frac{a( k-1 + r)}{\l} x^{k-1}$, and the process will be eventually absorbed at the constant $\frac{ \G(r+k)}{\lambda^k \G(r)}$.

Let $\tau_0 = 0$ and $\tau_{i+1} - \tau_i \sim \Exp(\l (k-i))$. Then for $h(x) = x^k$,
\ba{
f_h(x) &= -\int_0^\infty \left[ \E Y_x(1,t) - \frac{ \G(r+k)}{\lambda^k \G(r)} \right]dt\\
	&= -\E \sum_{j=0}^{k-1} \int_{\tau_j}^{\tau_{j+1}}\left[ \frac{\G(r+k)}{\l^j \G(r + k - j)} x^{k-j} - \frac{ \G(r+k)}{\lambda^k \G(r)}\right] dt\\
	&= - \sum_{j=0}^{k-1} \frac{\G(r+k)}{(k-j) \l^{j+1} \G(r + k - j)} x^{k-j} + \frac{ \G(r+k)}{\lambda^k \G(r)} \sum_{j=0}^{k-1} \frac{1}{\l(k-i)}.
}
\begin{remark}
\cite[Theorem~2.6]{Luk94} shows that for a smooth test function with bounded derivatives $h$, the $j$-th derivative of the solution to the Stein equation satisfies
\ba{
\| f_h^{(j)}\| \leq \frac{\|h^{(j)}\|}{j \lambda},
}
where $\| \cdot \|$ denotes the sup norm and $f^{(j)}$ denotes the $j$-th derivative. Our test functions do not have bounded derivatives, with exception of the case where $h(x) = x$ and for the first derivative. In which case our bounds yield $f_h^{(1)}(x) = \frac{1}{\l}$, so the two bounds are identical.  
\end{remark}
\begin{example}[Beta distribution]\label{ex:beta}
\cite{Dobler15, GR13, GRR17} developed Stein's method for the Beta$(\a,\b)$ distribution. The generator presented is as follows:
\ba{
\cA f(x) = x(1-x) f''(x) + (\a(1-x) - \b x) f'(x).
}
This is the generator of (one of the many forms of) a neutral Wright-Fisher diffusion. There are a variety of dual processes associated with various forms of Wright-Fisher diffusion, see for example~\cite{K82, G80, G16} for just a small sample. Our form of duality will be similar in flavour and analogous to the ancestral/looking backwards in time Kingman's coalescent. Set $f(x) = x^k$,
\ba{
\cA f(x) &= (x-x^2) k(k-1)x^{k-2} + (\a(1-x)  - \b x) kx^{k-1}\\
	&= k(k-1) (x^{k-1} - x^k) + k(\a + \b) \left(\frac{\a}{\a+\b} x^{k-1} - x^k\right).  
}
Our dual process can therefore be described in the following manner. For the function $h(x) = a x^k$, 
\begin{itemize}
\item At a rate of $k(k-1)$, $ax^k$ transitions to $ax^{k-1}$,
\item At a rate of $k(\a + \b)$, $ax^k$ transitions to $\frac{\a}{\a+\b} a x^{k-1}$.
\end{itemize}
Notably the net transition rates of this process ($k(k-1+\a+\b)$) are equivalent to Kingman's coalescent. 

Let $\tau_0 = 0$ and $\tau_{i+1} - \tau_i \sim \Exp((k-i)(k-i-1 + \a + \b))$. Then given $Y_x(1,0) = x^k$, 
\ba{
\E Y_x(1,\tau_1) = \frac{k-1}{k-1 + \a + \b} x^{k-1} + \frac{\a}{k-1+\a+\b} x^{k-1} = \frac{k-1 + \a}{k-1+\a+\b}x^{k-1}.
}
Similarly, for $1 \leq j \leq k$,
\ba{
\E Y_x(1,\tau_j) = \left(\prod_{i=1}^{j} \frac{k-i + \a}{k-i+\a+\b}\right)x^{k-j}.
}
Notably, $\E Y_x(1,\tau_k) = \prod_{r=0}^{k-1} \frac{\a + r}{\a + \b +r} = \E Z^k$, as expected and required.

\ba{
f_h(x) &= - \E \sum_{j=0}^{k-1} \int_{\tau_j}^{\tau_{j+1}} \left(\prod_{i=1}^{j} \frac{k-i + \a}{k-i+\a+\b}x^{k-j}  - \prod_{r=0}^{k-1} \frac{\a + r}{\a + \b +r}\right)dt\\
	&= - \sum_{j=0}^{k-1} \frac{1}{(k-j)(k-j-1+\a+\b)}  \left(\prod_{i=1}^{j} \frac{k-i + \a}{k-i+\a+\b}\right)x^{k-j} \\
	&\ \ \ + \left[\sum_{j=0}^{k-1} \frac{1}{(k-j)(k-j-1+\a+\b)}\right] \prod_{r=0}^{k-1} \frac{\a + r}{\a + \b +r}.
}
Given the Beta distribution has finite support on $[0,1]$ we can use the above solution to find uniform bounds for $f_h(x)$ and its derivatives. For example, noting that $x \leq 1$, the first derivative satisfies
\ban{
\abs{f_h'(x)} &=  \sum_{j=0}^{k-1} \frac{1}{(k-j)(k-j-1+\a+\b)}  \left(\prod_{i=1}^{j} \frac{k-i + \a}{k-i+\a+\b}\right)(k-j)x^{k-j-1}\label{eq:beta1}\\
	&\leq \sum_{j=0}^{k-1} \frac{1}{(k-j-1+\a+\b)}\notag\\
	&\leq \frac{k}{\a + \b}.\notag
}
Similar computations show for the $n$-th derivative,
\ba{
\abs{ f_h^{(n)}(x)} \leq \frac{k(k-1) \ldots (k-n+1)}{n(\alpha + \beta + n-1)}.
}
Notably, these bounds correspond exactly to the bounds in~\cite[Theorem 5]{GRR17}. The bounds in~\cite{GRR17} are more general, they essentially cover all smooth test functions, where as the approach in this paper will only cover polynomial test functions. However, given polynomials are dense we have not lost too much, and this proof is far shorter and less complicated than the proof presented in the aforementioned work which involves couplings and coalescent theory.
\end{example}

\section{Multivariate solutions to Stein equations from Duality}
Results for Stein's method for multivariate distributions remain relatively limited. \cite{BLX18, BLX18a}~provide a framework for multivariate discrete distributions that is applicable in many scenarios. Outside a few examples such as multivariate Normal and Dirichlet, Stein's method for multivariate continuous distributions is underdeveloped. The generator method is the primary approach multivariate setting as nothing fundamentally changes when considering multivariate Markov processes compared to the univariate case, whereas often more purely analytic approaches become intractable. In a similar vein, our dual process approach will be equally effective in the multivariate setting without too much additional complexity. We omit a multivariate version of Lemma~\ref{mainlemma} as it is completely analogous and not particularly enlightening, and we instead focus on examples. 
\subsection{Multivariate examples}
In the multivariate setting we need to study not only the convergence of the moments of the marginals, but also convergence of all mixed moments. Hence we will be considering test functions of the form $f(\tx) = \prod_{i=1}^p w_i^{k_i}$ for some $p$ and $k_i$.
\begin{example}[Multivariate Normal]
For $p$-dimensional Normal approximation with mean $0$ and covariances $\{\sigma_{ij}\}^p_{i,j=1}$, the $p$-dimensional Ornstein-Uhlenbeck generator can be written in the form
\ba{
\cA f( \tx) = \sum_{i=1}^p \sum_{j=1}^p \sigma_{ij}  \frac{\partial{^2}}{\partial w_j \partial w_i}f(\tx) - \sum_{i=1}^p w_i \frac{\partial}{\partial w_i}f(\tx).
}
For $f(\tx) = \prod_{i=1}^p w_i^{k_i}$, and since the (multivariate) marginal distributions of the Normal distribution are still (multivariate) Normally distributed, without loss of generality assume that $k_i \geq 1$ for all $i$, and denote $f^{\{ij\}}(\tx) = \frac{f(\tx)}{w_i w_j}$. Then,
\ba{
\cA f(\tx) &= \sum_{i=1}^p k_i \left[ \sum_{j=1}^p \sigma_{ij}(k_j - \delta_{ij}) f^{\{ij\}}(\tx) - f(\tx) \right]\\
	&= \sum_{i=1}^p k_i (k_i-1) [\sigma_{ii} f^{\{ii\}}(\tx) - f(\tx)] + \sum_{i=1}^p \sum_{j\neq i} k_i k_j [\sigma_{ij}  f^{\{ij\}}(\tx) - f(\tx)],
}
where $\delta_{ij}$ denotes the Kronecker delta function. Given the above form, the process can be described as follows. For all $1 \leq i \leq p$,
\begin{itemize}
\item At rate $k_i(k_i-1)$, $f(\tx)$ transitions to $\sigma_{ii} f^{\{ii\}}(\tx)$,
\item At rate $k_ik_j $, $f(\tx)$ transitions to $\sigma_{ij} f^{\{ij\}}(\tx)$.
\end{itemize}
Analogously to Example~\ref{ex:normal} we can then use this dual process to solve for $f_h(x)$. We omit the calculation for the solution to the Stein equation as it is similar to Example~\ref{ex:normal} but rather long and tedious, and furthermore there already exists a well known nice closed form solution,
\ba{
f_h(\tx) =  - \int_0^\infty \left[ \IE( h(e^{-s}\tx + \sqrt{1-e^{-2s}}\Sigma^{\frac12} \tZ) - \IE h(\Sigma^{\frac12}\tZ) \right]ds,
}
where $\tZ$ is a random vector having standard multivariate normal distribution of dimension $p$.
\end{example}

\begin{example}[Dirichlet distribution]
The Dirichlet distribution can be viewed as a multivariate extension of the Beta distribution. We follow the parameterisation and generator used in~\cite{GRR17}. The Dirichlet distribution with parameters~$\ta=(a_1,\dots,a_K)$, where~$a_1>0, \ldots, a_K>0$, is supported on the~$(K-1)$-dimensional open simplex, which we parameterise as
\be{
	\simp=\left\{\tx=(x_1,\ldots, x_{K-1}): x_1> 0, \ldots, x_{K-1} > 0,  \sum_{i=1}^{K-1} x_i < 1\right\}\subset \IR^{K-1}.
}
On~$\simp$, the Dirichlet distribution with parameter $\ta$ has density
\ben{
	\psi_\ta(x_1,\dots,x_{K-1}) = \frac{\Gamma(s)}{\prod_{i=1}^K \Gamma(a_i)} \prod_{i=1}^K x_i^{a_i-1}, 
}
where~$s=\sum_{i=1}^K a_i$, and where we set~$x_K=1-\sum_{i=1}^{K-1} x_i$, as we shall often do whenever considering vectors taking values in~$\Delta_{K}$. 

The Stein operator used is the generator of the neutral Wright-Fisher diffusion with parent independent mutation,
\ba{
\cA f(\tx) = \sum_{i,j=1}^{K-1} x_i(\delta_{ij} - x_j) \frac{\partial^2}{\partial x_i \partial x_j} f(\tx) + \sum_{i=1}^{K-1}(a_i - sx_i) \frac{\partial}{\partial x_i}f(\tx),
}
where $\d_{ij}$ denotes the Kronecker delta function. Similar the multivariate Normal distribution, the multivariate marginal distributions of the Dirichlet distribution are still Dirichlet distributed, hence we can without loss of generality assume $f(x) = \sum_{i=1}^{K-1} x_i^{k_i}$ where $k_i \geq 1$ for all $i$. As in the previous example set $f^{\{i\}}(\tx) = \frac{f(\tx)}{x_i}$. Then
\ba{
\cA f(\tx) &= \sum_{i,j=1}^{K-1} x_i(\d_{ij} - x_j) k_i(k_j-\delta_{ij})f^{\{ij\}}(\tx)+ \sum_{i=1}^{K-1} k_i \left(a_i f^{\{i\}}(\tx) - sf(\tx)\right)\\
	&= \sum_{i,j=1}^{K-1} k_i(k_j-\delta_{ij})\left(\delta_{ij} f^{\{j\}}(\tx) - f(\tx)\right) + \sum_{i=1}^{K-1} sk_i \left(\frac{ a_i}{s} f^{\{i\}}(\tx) - f(\tx)\right)\\
	&= \sum_{i=1}^{K-1} k_i(k_i-1) \left(f^{\{i\}}(\tx) - f(\tx)\right) + 2\sum_{1 \leq i < j \leq K-1} k_ik_j\left( 0 - f(\tx) \right)\\
	&\ \ \ \ \ \ \ \ \ \ + \sum_{i=1}^{K-1} sk_i \left(\frac{ a_i}{s} f^{\{i\}}(\tx) - f(\tx)\right).
}
\end{example}
The dual process is unsurprisingly similar to the dual process in Example~\ref{ex:beta}, with exception of an additional `interaction' term in its generator. We describe the process as follows: For all $i,j$,
\begin{itemize}
\item At rate $k_i(k_i-1)$, $f(\tx)$ transitions to $f^{\{i\}}(\tx)$,
\item At rate $sk_i$, $f(\tx)$ transitions to $\frac{a_i}{s}f^{\{i\}}(\tx)$,
\item At rate $k_ik_j$, $f(\tx)$ transitions to 0.
\end{itemize}
For this example we will take a different approach, rather than explicitly solving for the solution to the Stein equation and then taking derivatives of the solution, we will instead use a coupling argument. Furthermore, we will only focus on bounding the derivative of the solution to the Stein equation rather than the solution itself, as derivative bounds are typically what are required in applications. Let $\eps_i$ be the unit vector in the $i$-th direction, then without loss of generality set $i=1$. 
\ba{
\frac{\partial}{\partial x_1} f_h(\tx) &= \lim_{\eps \to 0} \frac1\eps ( f_h(\tx + \eps e_1) - f_h(\tx))\\
	&= -\lim_{\eps \to 0} \frac1\eps \int_0^\infty [\E h(Z_{\tx + \eps e_1}(t)) - \E h(Z_{\tx}(t)) ]dt\\
	&= -\lim_{\eps \to 0} \frac1\eps \int_0^\infty [\E Y_{\tx + \eps e_1}(1,t) - \E Y_{\tx}(1,t) ]dt
}
To bound this quantity, we first observe that for $\tx \in \simp$,
\ba{
\abs{(x_1 + \eps)^{k_1}x_2^{k_2} \ldots x_{K-1}^{k_{K-1}} - x_1^{k_1}x_2^{k_2} \ldots x_{K-1}^{k_{K-1}}} \leq k_1 \eps + \lito(\eps).
}
We couple the transition times of the two processes $Y_{\tx + \eps e_1}(1,t)$ and $Y_{\tx}(1,t)$ exactly, and we can hence focus predominantly upon only the $x_1$ term. Furthermore it suffices to ignore the third class of transition where $f(\tx)$ transitions to 0 as an upper bound as this will only increase the coupling time of the processes $(Y_{\tx + \eps e_1}(1,t), Y_{\tx}(1,t)$), and we apply this to avoid interactions between the different dimensions in our process. Given we only focus on the one dimension, we can ignore all the transitions that involve the other dimensions, and use an identical calculation to~\eq{eq:beta1},
\ba{
\bbabs{\frac{\partial}{\partial x_1}f_h(\tx)} &\leq \sum_{j=0}^{k_1-1} \frac{1}{(k_1-j)(k_1-j-1+s)}  \left(\prod_{i=1}^{j} \frac{k_1-i + a_1}{k_1-i+s}\right)(k_1-j)\\
	&\leq \sum_{j=0}^{k_1-1} \frac{1}{(k_1-j-1+s)} \leq \frac{k_1}{s}.
}
Hence
\ba{
\sup_{\tx \in \simp}\sup_{1 \leq i \leq K-1} \bbabs{\frac{\partial}{\partial x_1}f_h(\tx)} \leq \frac{ \max_{1 \leq i \leq K-1} k_i} {s}. 
}
As in Example~\ref{ex:beta} this bound corresponds to the bounds in~\cite{GRR17}. We leave the higher order derivatives as an exercise for interested readers. Interestingly this suggests that there may exist an opportunity to improve upon these bounds given we were slightly crude in ignoring the transitions of the type $f(\tx) \to 0$.

\begin{example}[Poisson-Dirichlet distribution and Dirichlet process]\label{ex:PD}
Our final example is an application for an infinite dimensional distribution. The original motivation for this work was for the formulation of Stein's method for Poisson-Dirichlet and Dirichlet process approximation. For the following, full details can be found in a forthcoming preprint~\cite{GR19}.

We begin by describing the Poisson-Dirichlet distribution. Define a random interval partition of the interval $[0,1]$. We encode the space of interval partitions as
\be{
\nabla_\infty:=\bclc{(p_1,p_2,\ldots): p_1\geq p_2 \geq \cdots, \sum_{i=1}^\infty p_i=1},
}
and define~$\PD(\theta)$, the \emph{Poisson-Dirichlet} distribution on $\nabla_\infty$ with parameter $\theta>0$. The Poisson-Dirichlet distribution can be described/constructed in a 
number of ways, such as the limiting distribution of proportions in integer partitions, the limit as the dimension goes to infinity, of appropriate Dirichlet distributions, the distribution of the normalised points of a Poisson process with intensity measure $\theta e^{-x}/x, x > 0$~\cite{Kingman75} and via stick breaking schemes and the GEM distribution; see \cite{Pitman06} for an overview. 

We define a Dirichlet process with parameters $\theta>0$ and $\pi$ a probability measure on a compact metric space~$E$, denoted $\DP(\theta, \pi)$, to be the random element of $M_1(E)$, the space of the probabilities measures on~$E$ equipped with the weak topology. We say $\mu\sim\DP(\theta, \pi)$ if 
\ben{
\mu=\sum_{i=1}^\infty P_i \delta_{\xi_i}, \label{mucomp}
}
where $(P_1,P_2,\ldots)\sim \PD(\theta)$ is independent of $\xi_1,\xi_2,\ldots$, which are i.i.d.\ with distribution~$\pi$.
Thus we think of $\DP(\theta,\pi)$ as the Poisson-Dirichlet distribution with independent $\pi$-distributed labels. 

The Stein operator used for Dirichlet process approximation is a particular class of Fleming-Viot processes. Define an operator~$A$ by
\begin{align}
A \phi(x) = \frac\theta2 \int_E (\phi(y) - \phi(x))\pi(dy), \label{A}
\end{align}
where the domain of $A$ is the set of continuous functions $C(E)$. We define the generator~$\cA$ of our Fleming-Viot Markov process; for $F\in\cD$
and $\mu\in M_1(E)$, 
\ben{\label{eq:dpgen}
\cA F (\mu)=\int_{E} \left(A \frac{\partial F(\mu)}{\partial \d_x}\right) \mu(dx) + 
	\frac{1}{2} \int_{E^2} (\mu(dx)\delta_x (dy) - \mu(dx) \mu(dy)) \frac{\partial^2 F(\mu)}{\partial \d_x \partial \d_y},
}
where $\delta_x(\cdot)$ is Dirac measure, and for any measure $\nu \in M_1$ we define the directional (G\^ateaux) derivatives by, 
\ban{
\frac{\partial F(\mu)}{\partial \nu}
	&:=\lim_{\eps \to 0^+} \frac{F((1-\eps)\mu+ \eps \nu) - F(\mu)}{\eps},
%\frac{\partial^2 F(\mu)}{\partial \nu_1 \partial \nu_2}
%	&:=\lim_{\eps_1, \eps_2 \to 0^+} \frac{F((1-\eps_1-\eps_2)\mu+ \eps_1 \nu_1+\eps_2\nu_2)  - F((1-\eps_1)\mu + \eps_1\nu_1) - F((1-\eps_2)\mu + \eps_2\nu_2) + F(\mu)}{\eps_1\eps_2},
}
with higher order derivatives analogously defined.

For $f : E \mapsto \IR$, let $\angle{f,\mu}$ denote the expectation of $f$ with respect to $\mu$, that is $\angle{f,\mu} = \int_E f(x) \mu(dx)$. Furthermore, denote let $\mu^k$ denote the product measure of $k$ independent copies of $\mu$, and for  $f : E^k \mapsto \IR$ analogously define $\angle{f, \mu^k}$. The domain of the generator~\eq{eq:dpgen} we wish to consider is
\ben{
\cD = \{ F(\mu) := \angle{\phi, \mu^k}: k \in \IN, \phi \in \rB(E^k)\},\label{SD2}
}
where $\rB(E^{k})$ is the space of all bounded functions on $E^k$. To see how this space of test functions is related to moments, for example set $\phi(x,y) = \ind_{x=y}$, then $F(\mu) = \sum_{i=1}^\infty P_i^2$, where the $P_i$ are as in~\eq{mucomp}.

For $F = \angle{\phi,\mu^k}$, simple computations show that  our generator~\eq{eq:dpgen} can be reduced to the form
\ban{
\cA F(\mu) = \sum_{1 \leq i < j \leq k} [\angle{\Phi_{ij}^{(k)}\phi, \mu^{k-1}} - \angle{\phi, \mu^k}] + \frac{\theta}{2} \sum_{1 \leq i \leq k}[\angle{\phi, \mu^{i-1} \pi \mu^{k-i}} - \angle{\phi,\mu^k}],\label{eq:dualgen}
}
where $\Phi_{ij}^{(k)}\phi(x_1, \ldots, x_{k-1}) = \phi(x_1, \ldots, x_{j-1}, x_i, x_j, \ldots, x_{k-1})$. From the above formulation, we follow \cite[Section~3]{EK93} (in turn following \cite{DH82}), and define our dual
Markov jump process on $\cup_{j\geq1} \rB(E^j)$ with
$Y(0)=\phi$.
Informally, there are two types of transition for our jump process on functions 
\begin{itemize}
\item At a rate of 1, for each $1 \leq i,j \leq k$, the argument $x_i$ in $\phi(\tx)$ is replaced with $x_j$,
\item At a rate of $\frac{\theta}{2}$, for each $1 \leq i \leq k$, the argument $x_i$ in $\phi(\tx)$ is replaced with a $\pi$ distributed random variable.
\end{itemize}
As stated earlier, for full details about this formulation and how to use this to calculate bounds for derivatives of the Stein solution, see~\cite{GR19}. 
%given $Y(t)=\phi \in \rB(E^j)$, for each $\ell=1,\ldots,j$,  the process transitions at rate $\theta/2$ to functions in $\rB(E^{j-1})$ of the form
%\be{
%(x_1,\ldots,x_{j-1}) \mapsto \int_E \phi(x_1,\ldots,x_{\ell-1},x,x_{\ell},\ldots, x_{j-1}) \pi(dx),
%}
%and for each $1\leq i<\ell\leq j$, it transitions at rate~$1$ to functions in $\rB(E^{j-1})$ of the form
%\be{
%(x_1,\ldots,x_{j-1}) \mapsto \phi(x_1,\ldots,x_{\ell-1},x_i,x_{\ell},\ldots, x_{j-1}).
%}
%Given $H=\angle{\phi,\mu^k}\in\cD_2$ with $\phi\in B(E^k)$, \cite[Theorem~3.1]{EK93} show that
%\ben{\label{eq:dualform}
%\IE \bcls{H(Z_\mu(t))}= \IE \bcls{\angle{Y_\phi(t), \mu^{k-M_k(t)}}},
%}
%where $M_k(t)$ is the number of transitions of the process $(Y_\phi(\cdot))$ up to time $t$.
\end{example}

\section{Discussion}
As seen by the numerous examples, this duality approach appears to give results that are on par with existing techniques for calculating Stein solutions. To the author's knowledge, these are the first results where such a duality is used to calculate solutions to the Stein equation. We believe that there is potential for the ideas presented to be pushed and extended in a variety of interesting directions. In particular we present a few open questions that are of interest to the author.
\begin{enumerate}
\item Our focus in this paper was on test functions for moments. Are there other classes of test functions that can be approached using a dual process? For example is it possible to find a duality argument for test functions that correspond with Kolmogorov or Wasserstein distances?
\item In this paper, we convert difficult problems on diffusions to simpler problems on Markov jump processes. Is it possible to reverse this? \cite{BX01} give a flexible framework for Stein's method for Markov birth-death jump processes that is widely applicable, but there are examples such as the compound Poisson distribution that have multiple births and/or death events that remain intractable. Is it possible that problems involving complicated Markov jump processes could be instead approached using a simpler diffusion or perhaps even a simpler jump process?
\item The key term in our Stein operators that enables the duality argument is the $x f'(x)$ since this yields $xf'(x) = k f(x)$ for $f(x) = x^k$. This is therefore similar to size bias distributions. It would be interesting to explore if there is a direct connection between size biasing and moment duality.
\item Not all Stein operators have the key $xf'(x)$ term, for example the generalised Gamma distribution~\cite{PRR16} has Stein operator $\cA f(x) = f''(x) - \left( \frac{\alpha - 1}{x} - \b x^{\b-1}\right) f'(x)$, for some $\a, \b > 0$. Does there still exist some form of moment duality outside of the case $\b = 1$? Does there still exist a form of moment duality when we do not have an $xf'(x)$ term?
\end{enumerate}

\section{Acknowledgments}
The author would like to thank Nathan Ross for his contributions to Example~\ref{ex:PD} and for helpful discussions.

\bibliographystyle{apalike}
\bibliography{references.bib}

\end{document}